\newtheorem{theorem}{Theorem}[section]
\newtheorem{lemma} [theorem] {Lemma}
\let\eps=\epsilon
\def\enddiscard{}
\long\def\discard#1\enddiscard{}
\newcommand{\bin}{{\rm Bin}}
\newcommand{\E}{\mathbb E}
\newcommand{\pr}{\mathbb P}
\newcommand{\inu}{\in_{u}}
\newcommand{\whp}{whp }
\newcommand{\m}[1]{\marginpar{\tiny{#1}}}
\newcommand{\ca}{{\mathcal A}}
\newcommand{\cA}{{\mathcal A}}
\newcommand{\cc}{{\mathcal C}}
\newcommand{\cC}{{\mathcal C}}
\newcommand{\ce}{{\mathcal E}}
\newcommand{\cE}{{\mathcal E}}
\newcommand{\cg}{{\mathcal G}}
\newcommand{\cG}{{\mathcal G}}
\newcommand{\cp}{{\mathcal P}}
\newcommand{\cP}{{\mathcal P}}
\newcommand{\ct}{{\mathcal T}}
\newcommand{\cT}{{\mathcal T}}
\newcommand{\cu}{{\mathcal U}}
\newcommand{\ex}{{\rm Ex}}
\newcommand{\bt}{{\rm BT}}
\newcommand{\btf}{{\rm BF}}
\newcommand{\dist}{{\rm dist}}
\begin{document}

\title{Random graphs from a block-stable class}

\date{\today}

\author{Colin McDiarmid and Alexander Scott}

\address{${}^1$
Department of Statistics
1 South Parks Road
Oxford, OX1 3TG,
United Kingdom; 
${}^2$
Mathematical Institute,
University of Oxford,
Andrew Wiles Building,
Radcliffe Observatory Quarter,
Woodstock Road,
Oxford,
OX2 6GG, UK}

\maketitle

\begin{abstract}
A class of graphs is called {\em block-stable} when a graph is in the class if and only if each of its blocks is.
We show that, as for trees, for most $n$-vertex graphs in such a class, each vertex is in at most $(1+o(1)) \log n / \log\log n$ blocks, and each path passes through at most $5 (n \log n)^{1/2}$ blocks.
These results extend to `weakly block-stable' classes of graphs.
\end{abstract}


\section{Introduction}
\label{sec.intro}

  A {\em block} in a graph is a maximal 2-connected subgraph or the subgraph formed by a bridge or an isolated vertex. (A \emph{bridge} is an edge the deletion of which increases the number of components.)
Call a class of graphs (always assumed to be closed under isomorphism) 
{\em block-stable} when a graph $G$ is in the class if and only if each block of $G$ is in the class.
For example, the class of all forests is block-stable and more generally so is any minor-closed class of graphs with 2-connected excluded minors.  A different example is the class of all graphs in which each block is a triangle.

In this paper, we are interested in typical properties of graphs from such a class.
Indeed we are interested in more general classes of graphs, namely `weakly block-stable' classes.
To define this notion, let us first introduce an equivalence relation on (finite) graphs, which is natural in this context.  Given connected graphs $G$ and $H$, let $G \sim H$ if they have the same vertex set and the same number of blocks of each kind (up to isomorphism). 
Given general graphs $G$ and $H$, let $G \sim H$ if we can list the components as $G_1,\ldots,G_k$ and $H_1,\ldots,H_k$ (for some $k$) so that $G_i \sim H_i$ for each~$i$.  
%
We say that a class $\cA$ of graphs is \emph{weakly block-stable} if whenever $G \in \cA$ and $H \sim G$ then $H \in \cA$.
Clearly a block-stable class is weakly block-stable, but not conversely.  
\smallskip

As mentioned above, we are most interested in typical properties of graphs from a block-stable class, but our results extend to weakly block-stable classes of graphs, and indeed that is the natural context for our investigations.
In particular, we are interested in the maximum number of blocks containing a given vertex, and the maximum number of blocks a path can pass through.

For a connected graph $G$, these are essentially properties of the {\em block tree} $\bt(G)$ of $G$, which is the bipartite graph with a node $x_v$ for each vertex $v$ and a node $y_B$ for each block $B$, where $x_v$ and $y_B$ are adjacent if and only if $v \in B$. (There is an alternative slimmer version of the block tree, in which vertices which are not cut-vertices are ignored.)
If $G$ is not necessarily connected, we let the {\em block forest} $\btf(G)$ be the disjoint union of the block trees of the components.

Given a set $\cA$ of graphs,
for each positive integer $n$ let $\cA_n$ denote the set of graphs in $\cA$ on vertex set $[n]:=\{1,\ldots,n\}$.  Also, let $R_n \inu \cA$ mean that $R_n$ is sampled uniformly from $\cA_n$.  When we use this notation we implicitly consider only integers $n$ such that $\cA_n$ is non-empty.
Now suppose that $\cA$ is weakly block-stable and $\cP$ is any graph property.
Note that $\cA_n$ may be partitioned into the distinct equivalence classes $[G]$ for $G \in \cA_n$ (where the equivalence relation is graph isomorphism).
Thus if we can show for each $G \in \cA_n$ that
$\pr(R \in \cP) \geq t$ when $R \in_u [G]$, then it will follow that $\pr(R_n \in \cP) \geq t$ when $R_n \in_u \cA$.
We say that a sequence $(E_n)$ of events holds \emph{with high probability} (\whp\!\!) if $\pr(E_n) \to 1$ 
as $n \to \infty$.

Let $\cT$ denote the class of trees, and let $T_n \inu \cT$. It will be natural for us to compare the block tree $\bt(R_n)$ with $T_n$,
and to compare the associated degree sequences.  Given two random variables $X$ and $Y$, we say that $X$ is {\em stochastically at most $Y$} if
$\pr(X\geq t)\le\Pr(Y\geq t)$ for every real number $t$.
More generally, for two sequences ${\bf X}=(X_1,\dots,X_n)$ and ${\bf Y}=(Y_1,\dots,Y_n)$ of random variables, we say that {\em ${\bf X}$ is stochastically at most ${\bf Y}$} if
$\E[f({\bf X})] \leq \E[f({\bf Y})]$ for each non-decreasing integrable real-valued function $f$ on ${\mathbb R}^n$.

We are interested in typical properties of $R_n$ when $\cA$ is a (weakly) block-stable class, or is the set of connected graphs in such a class; and in particular we focus on degrees of nodes $x_v$ and on long paths in $\bt(R_n)$ or $\btf(R_n)$.  We present our main results in the next two subsections.

Consider briefly a related but distinct setting, where there are results of a different nature. Suppose that our block-stable class is the class of all series-parallel graphs or another `subcritical' graph class, or it is the class of planar graphs, or another such class where we know the corresponding generating functions suitably well.  In such cases, we may be able to deduce precise asymptotic results, for example about vertex degrees or the numbers and sizes of blocks, by using analytic techniques or by analysing Boltzmann samplers:
see for example~\cite{bps09}, \cite{dfkkr}, \cite{dgnps2015}, \cite{dn2013}, \cite{fp11}, \cite{gmn13}, \cite{gn09a}, \cite{gn09b}, \cite{gnr}, \cite{ps2010}, \cite{psw2015+},
and for an authoritative recent overview of related work on random planar graphs and beyond see the article~\cite{noy2014} by Marc Noy.

The main tools we use in our proofs are a tree-like graph $\widetilde{G}$ related to the block-tree of a graph $G$, and a corresponding tree $T_G$, together with a slight extension of Pr$\ddot{\rm u}$fer coding: these are discussed in the next section.  The proofs of Theorem~\ref{thm.degreesnew} and Theorem~\ref{thm.diamnew} are completed in Sections~\ref{sec.blockdeg} and \ref{sec.pathlengths} respectively, and we make some brief concluding remarks in Section~\ref{sec.concl}.


\subsection{Block-degrees of vertices}

First consider the number of blocks in $G$ containing a vertex $v$, that is, the degree of the node $x_v$ in the block tree $\bt(G)$: let us call this number the \emph{block-degree} of $v$, and denote it by $\widetilde{d}_G(v)$. Observe that if $G$ is a tree (with at least two vertices) then $\widetilde{d}_G(v)$ is just the degree $d_G(v)$ of $v$ in $G$.
Denote the maximum of the numbers $\widetilde{d}_G(v)$ by $\widetilde{\Delta}(G)$. Recall that, for $T_n \inu \ct$, the maximum degree $\Delta$
satisfies 
\begin{equation}\label{treeD1}
  \Delta(T_n) \sim \log n / \log\log n \;\;\; \whp ,
\end{equation}
see~\cite{moon68},~\cite{cgs94}. 
Also, for any constant $c>0$, 
\begin{equation}\label{treeD2}
  \pr(\Delta(T_n) \geq c n / \log n) = e^{-(c+o(1))n}. 
\end{equation}
Both these results follow easily from considering Pr$\ddot{\rm u}$fer coding.

The following theorem says roughly that block degrees are no larger than those for a random tree $T_n$. In particular, if we sample $R_n$ uniformly from the connected graphs in a block-stable class, then the maximum block degree $\widetilde{\Delta}(R_n)$ is stochastically at most $\Delta(T_n)$, and so \whp it is no more than about $\log n / \log\log n$;
and indeed we can improve the bound if there are few blocks.

\begin{theorem} \label{thm.degreesnew}
  Let $\cA$ be a weakly block-stable class of graphs and let $\cC$ be the class of connected graphs in $\cA$.

  (a) For $R_n \inu \cC$, the list of block degrees $(\widetilde{d}_{R_n}(v): v \in [n])$ is stochastically at most $(d_{T_n}(v):v \in [n])$, where $T_n \inu \cT$ is a uniformly random tree on~$[n]$;
and in particular the maximum block degree $\widetilde{\Delta}(R_n)$ is stochastically at most $\Delta(T_n)$.

(b) For $R_n \inu \cA$,
\begin{equation} \label{eqn.deg1} 
  \widetilde{\Delta}(R_n) \leq (1+\eps(n))\log n / \log\log n \;\;\; \whp
\end{equation}
where $\eps(n)=o(1)$, and indeed we may take $\eps(n)= 2 \log\log\log n / \log\log n$, (whatever $\cA$ is); and for any constant $c>0$
\begin{equation} \label{eqn.deg2}
 \pr( \widetilde{\Delta}(R_n) \geq c n/\log n) \leq e^{-(1-\eta(n))c n},
\end{equation}
where $\eta(n)=o(1)$, and indeed we may take $\eta(n)= 2 \log\log n / \log n$. 
Further, if the number of blocks in graphs in $\cA_n$ is at most $k=k(n)$ where $k \to \infty$ as $n \to \infty$, then
\begin{equation} \label{eqn.degk} 
  \widetilde{\Delta}(R_n) \leq (1+\eps(k))\log k / \log\log k \;\;\; \whp
\end{equation} 
where the function $\eps$ is as above.
\end{theorem}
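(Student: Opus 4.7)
The plan is to prove a Prüfer-type identity that pins down the exact joint distribution of block degrees within each equivalence class, and then derive all the tail bounds from standard multinomial estimates.

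\emph{Central identity.} If $G$ is a connected graph on $[n]$ with $k$ blocks and $R \inu [G]$, I would show that $(\widetilde{d}_R(v) - 1)_{v \in [n]}$ has the $\mathrm{Multinom}(k-1;\, 1/n, \ldots, 1/n)$ distribution. To prove this, view the block tree $\bt(H)$ of each $H \in [G]$ as a bipartite tree on $[n] \cup \{B_1, \ldots, B_k\}$ and use the bipartite analogue of Pr\"ufer/Cayley: the number of bipartite trees on $V \cup U$ (with $|V|=n$, $|U|=k$) with prescribed vertex-side degrees $(\widetilde{d}_v)_v$ and block-side degrees $(b_j)_j$ is $\binom{k-1}{\widetilde{d}_1-1,\ldots,\widetilde{d}_n-1}\binom{n-1}{b_1-1,\ldots,b_k-1}$. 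Since the block sizes $b_j$ are determined by the iso types, and each bipartite tree extends to the same number of graphs in $[G]$ (after choosing an internal labeling of each block, modulo block automorphisms and permutations of same-type blocks---factors all independent of $\widetilde{d}$), the count $|[G]|_{\widetilde{d}}$ is proportional to $\binom{k-1}{\widetilde{d}_1-1,\ldots,\widetilde{d}_n-1}$, whence the multinomial distribution. I expect the auxiliary tree $T_G$ and tree-like graph $\widetilde{G}$ from the introduction to organise this bookkeeping cleanly via a single extension of ordinary Pr\"ufer coding.

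\emph{Part (a).} The bipartite block tree has $n+k-1$ edges with each block contributing at least $2$ to its side's degree sum, so $k \le n-1$. Hence $\mathrm{Multinom}(k-1;\,1/n)$ is dominated coordinate-wise by $\mathrm{Multinom}(n-2;\,1/n)$ via the coupling that throws $n-1-k$ extra balls, and the latter is exactly $(d_{T_n}(v)-1)_v$ by the classical Pr\"ufer code. Therefore $(\widetilde{d}_R(v))_v \preceq (d_{T_n}(v))_v$ within each equivalence class $[G]$, and averaging over the equivalence classes partitioning $\cC_n$ gives the stochastic domination of part (a).

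\emph{Part (b).} Applying the central identity within each component $C$ of $R_n$ (of size $m_C$ with $k_C$ blocks), the marginal distribution of $\widetilde{d}_R(v) - 1$ for $v \in C$ is $\bin(k_C-1,\, 1/m_C)$, so
\begin{equation*}
\pr\Bigl(\max_{v \in C}\widetilde{d}_R(v) \ge t+1\Bigr) \le m_C \binom{k_C-1}{t} m_C^{-t} \le \frac{k_C}{t!},
\end{equation*}
using $k_C \le m_C$ and $\binom{k_C-1}{t} \le k_C^t/t!$. Summing over the components of $R_n$ gives $\pr(\widetilde{\Delta}(R_n) \ge t+1) \le K/t!$, where $K$ is the total number of blocks in $R_n$. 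Since $K \le n-1$ always, setting $t+1 = (1+\eps(n))\log n/\log\log n$ with $\eps(n) = 2\log\log\log n/\log\log n$ and applying Stirling gives (\ref{eqn.deg1}); setting $t+1 = cn/\log n$ gives (\ref{eqn.deg2}) via $\log(n/t!) = -cn(1 - O(\log\log n/\log n))$; and under the hypothesis $K \le k(n)$ of (\ref{eqn.degk}), the same Stirling estimate with $n$ replaced by $k$ yields (\ref{eqn.degk}).

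\emph{Main obstacle.} The technical heart is the first step---verifying that $|[G]|_{\widetilde{d}}$ is really proportional to $\binom{k-1}{\widetilde{d}_1-1,\ldots,\widetilde{d}_n-1}$ with the constant independent of $\widetilde{d}$. The combinatorial bookkeeping for internal block labelings and for permutations of blocks of the same iso type is delicate but ultimately routine; this is presumably what the auxiliary tree $T_G$ and tree-like graph $\widetilde{G}$ from the introduction are designed to handle in a unified way, yielding the whole of Theorem~\ref{thm.degreesnew} from a single Pr\"ufer-style enumeration.
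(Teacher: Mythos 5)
Your proposal is correct in outline and arrives at the same quantitative bounds, but your route to the central distributional identity is genuinely different from the paper's. The paper does not count via the bipartite block tree over all of $[G]$ at once: it partitions each class $[G]$ (for connected $G$ with $k$ blocks) into finer ``explosion neighbourhoods'' $\cG_G$, within which the labelled copies of the blocks and their attachment pattern to the parts $P_i$ are frozen, and exhibits an exact bijection between $\cG_G$ and $[n]^{k-1}$ via a Pr\"ufer-type code on the exploded graph $\widetilde G$; the identity $\widetilde d_R(j)=1+a(j,{\bf x}_R)$ with ${\bf x}_R$ uniform on $[n]^{k-1}$ then yields your multinomial law with no accounting whatsoever for block automorphisms, placements of labelled copies, or permutations of same-type blocks --- all of that is absorbed into the definition of $\cG_G$, and the same construction does double duty for Theorem~\ref{thm.diamnew} via the tree $T_G$. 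Your bipartite-Cayley count does reach the same conclusion, but the step you flag as ``delicate but ultimately routine'' (that the fibre of the map from triples (labelled bipartite tree, type assignment, block placement) to graphs in $[G]$ has size independent of the vertex-side degree sequence $\widetilde{d}$) is precisely the crux and is left unverified; it is doable, but the paper's finer partition is designed to make the issue vanish. Your part (a) coupling (pad $k-1\le n-2$ balls up to $n-2$) and your part (b) tail bound $K/t!$ (versus the paper's $k(e/s)^s$ --- the same bound up to Stirling) match Lemma~\ref{lem.conn-degree} and Lemma~\ref{lem.maxdegreenew} respectively. One step you should make explicit in part (b): ``applying the identity within each component of $R_n$'' requires first conditioning on $R_n\in[G]$ for a fixed $G$, so that the component vertex sets $W_i$ and block counts $k_i$ are deterministic and $R_n[W_i]$ is uniform on $[G_i]$; the union bound is then over a fixed list of components, exactly as in the paper's proof of Lemma~\ref{lem.maxdegreenew}, and the unconditional statements follow by averaging over the classes $[G]$ partitioning $\cA_n$.
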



\noindent
For $R_n \inu \ca$ as in part (b), there is no detailed result on stochastic dominance by a tree like that for $R_n \inu \cc$ in part (a) (see the comment following Lemma~\ref{lem.maxdegreenew} below). 
Of course the inequality~(\ref{eqn.degk}) implies the earlier inequality~(\ref{eqn.deg1}) since there can be at most $n$ blocks. 
Theorem \ref{thm.degreesnew} will be deduced from more precise non-asymptotic results, Lemmas~\ref{lem.conn-degree} and~\ref{lem.maxdegreenew} below.

Finally here, consider the class $\ex(C_4)$ of graphs with no minor the cycle $C_4$ on 4 vertices.  For graphs in this class, each block is a vertex or an edge or a triangle.  Thus, for $R_n \inu \ex(C_4)$, by~(\ref{eqn.deg1}) we have
\[ \Delta(R_n) \leq (2+o(1))\log n / \log\log n \;\;\; \whp, \]
as in~\cite{gmn13} Lemma 10.  This inequality is tight, and we have 
\[  \Delta(R_n) \log\log n / \log n \to 2 \; \mbox{ in probability as } n \to \infty.  \]
For the lower bound, see Theorem 4.1 of~\cite{mr08} (suitably amended) or Theorem 3 part 2 of~\cite{gmn13}. 
 

\subsection{Block length of paths}

Now we consider paths, and see that graphs in $\cA$ are unlikely to contain any path which passes through many blocks (that is, any path which has edges in many different blocks). The
{\em diameter} of a graph is the maximum distance 
between any two vertices in the same component.

For $T_n \inu \cT$, with probability near 1 the diameter of $T_n$ is of order $\sqrt{n}$~\cite{rs67}:
more exactly, for any $\eps>0$ there are constants $0<c_1<c_2$ such that with probability at least $1-\eps$ the diameter is between $c_1 \sqrt{n}$ and $c_2 \sqrt{n}$.
See~\cite{fo82} for a precise result on the maximum length of a path from a root vertex to another vertex (see also Theorem 4.8 of~\cite{drmotabook}).  For contrast, it was shown in~\cite{dn2013} that whp 
the diameter of a random planar graph $R_n$ is $n^{\frac14 +o(1)}$,
see also~\cite{cfgn} for more precise information. Also, observe that for $n \geq 2$ the probability that $T_n$ has diameter $n\!-\!1$ (that is, $T_n$ is a path) is
$\; n!/(2n^{n-2}) = e^{-n +O(\log n)}$.

The following theorem shows in particular that, if we sample $R_n$ uniformly from the connected graphs in a block-stable class, then \whp the block tree $\bt(R_n)$ has diameter at most $5 \sqrt{n \log n}$.
We conjecture that the extra factor $\sqrt{\log n}$ (compared with the random tree $T_n$) could be replaced by any function tending to $\infty$.  

\begin{theorem} \label{thm.diamnew}
  Let $\cA$ be a weakly block-stable class of graphs, and let $R_n \inu \cA$. 
  Then 
  \whp the block forest $\btf(R_n)$ has diameter at most $5 \sqrt{n \log n}$;
and for each $\eps>0$ the probability that $\btf(R_n)$ has diameter at least $\eps n$ is $e^{-\Omega(n)}$, where the function $\Omega(n)$ does not depend on 
the class $\cA$.
%
Further, if the number of blocks in graphs in $\cA_n$ is at most $k=k(n)$ where $k \to \infty$ as $n \to \infty$, then \whp the block forest $\btf(R_n)$ has diameter at most $5 \sqrt{k \log k}$.
\end{theorem}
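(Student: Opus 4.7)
The plan is to reduce the diameter of $\btf(R_n)$ to the diameter of a uniformly random labelled tree, via the $T_G$ construction and extended Pr\"ufer coding set up in Section~2, and then invoke classical diameter bounds for random trees.

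First, I partition $\cA_n$ into its $\sim$-equivalence classes $[G]$. Weak block-stability guarantees that each class lies entirely in $\cA_n$, so the theorem reduces to proving the diameter bound conditional on $R_n\in[G]$ for each $G$ separately. Since $\sim$ preserves the partition into components and their vertex sets, splitting $G$ into components and taking a union bound reduces matters further to a single connected $G$ on $n$ vertices with $b$ blocks of prescribed isomorphism types; here we need to control $\mathrm{diam}(\bt(R))$ for $R\inu[G]$.

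Second, I appeal to the tree $T_G$ and the extended Pr\"ufer coding developed in Section~2. As will be established there, the map $R\mapsto T_R$ pushes the uniform distribution on $[G]$ to the uniform distribution on labelled trees with a fixed vertex set of some size $m$, where $m=O(n)$ in general and $m=O(k)$ in the bounded-block-count case (non-cut vertices appear as pendant leaves in $\bt(G)$ and contribute only negligibly to the relevant tree structure). Moreover, by construction, $\mathrm{diam}(\bt(R))$ differs from $\mathrm{diam}(T_R)$ by at most a bounded multiplicative factor, since any long path in the block tree projects onto a path of comparable length in $T_R$ once pendant leaves are stripped away.

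Third, I apply the classical concentration for the diameter of a uniformly random labelled tree $T_m$ on $m$ nodes: for any constant $c$ large enough one has $\mathrm{diam}(T_m)\leq c\sqrt{m\log m}$ \whp (provable either by a union bound over pairs of nodes combined with the sub-Gaussian tails of root-to-node distances, or by direct Pr\"ufer-code analysis), and for any fixed $\eps>0$, $\pr(\mathrm{diam}(T_m)\geq \eps m)=e^{-\Omega(m)}$ with the implied constant depending only on $\eps$. Inserting $m=O(n)$ (respectively $m=O(k)$) and absorbing the multiplicative slack from the projection into the constant $5$ then yields both asserted inequalities.

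The main obstacle is making Step~2 precise. One must verify that the extended Pr\"ufer correspondence really turns the uniform distribution on $[G]$ into a uniform distribution on labelled trees of the stated size $m$, and that $\mathrm{diam}(\bt(R))$ is controlled by $\mathrm{diam}(T_R)$ with only a bounded multiplicative loss. In particular, showing $m=O(k)$ when $b\le k$---so that the sharper bound $5\sqrt{k\log k}$ is genuinely nontrivial---hinges on the precise definition of $T_G$ from Section~2, which is where the heavy lifting takes place.
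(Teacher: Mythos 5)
Your reduction in Step 1 (equivalence classes, then components, then a union bound) matches the paper, and the factor-of-two relation between $\mathrm{diam}(\btf(R))$ and the number of blocks met by a path is also handled the same way (a path of length $t$ in the block forest forces a path through at least $t/2$ blocks, hence a path of length about $t/2$ in $T_R$, which is a tree on exactly $k+1$ nodes --- so the $5\sqrt{k\log k}$ bound is automatic once the connected case is done, and $5\sqrt{n\log n}$ follows since $k+1\le n$). The genuine gap is in Step 2: the extended Pr\"ufer correspondence does \emph{not} push the uniform distribution on $[G]$ (or on an explosion neighbourhood $\cG_G$) to the \emph{uniform} distribution on labelled trees. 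What is uniform is the codeword ${\bf x}_R\in[n]^{k-1}$; the induced law of $T_R$ on trees on $[k+1]$ is $\pr(T_R=T)\propto\prod_j w_j^{d_T(j)-1}$ with $w_j=|Q_j|$, equivalently its Pr\"ufer codeword is an iid sequence with $\pr(X=j)=w_j/\sum_i w_i$. These weights can be wildly unequal (one huge block and many single edges), so you cannot invoke classical diameter concentration for the uniform random labelled tree; you would first have to show that the weighted tree's diameter tail is no worse, and that is precisely the technical heart of the paper's argument, which your proposal omits.

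Concretely, the paper proves (Lemma~\ref{lem.pathlength}) that for \emph{arbitrary} positive weights the probability that two specified vertices of $T({\bf X})$ are at distance at least $t+1$ is at most $e^{-\binom{t}{2}/m}$. This rests on two points you would need to supply: (i) a long path between the two terminal vertices of the Pr\"ufer algorithm forces the last $t$ codeword entries to be distinct, and the probability that $t$ iid samples from \emph{any} distribution on $[m]$ are all distinct is at most $(m)_t/m^t$ (Lemma~\ref{lem.distinct}, proved by induction together with a convexity argument --- this is where non-uniformity is shown to only help); and (ii) a symmetrization step to transfer the bound from the distinguished pair $(m-1,m)$ to an arbitrary pair $(i,j)$, which is not immediate because the weighted law is not exchangeable. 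A union bound over $\binom{m}{2}$ pairs then gives the $2k^2e^{-t^2/2(k+1)}$ estimate of Lemma~\ref{lem.conn-path}, from which both the $\whp$ statement and the $e^{-\Omega(n)}$ statement follow. Without replacing your appeal to uniform-tree folklore by an argument valid for the weighted law, the proof does not go through.
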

Observe that if the blocks in the graphs considered are of bounded size (for example in the block class $\ex(C_4)$ of graphs with no minor $C_4$ each block has at most 3 vertices) then these results transfer easily from block trees or forests to $R_n$ itself.

  To prove Theorem~\ref{thm.diamnew} we give a precise non-asymptotic lemma, Lemma~\ref{lem.pathnew} 
below, from which the theorem 
will follow easily.


\section{Trees and coding following Pr\"ufer}
\label{sec.Pcoding}

Let $\cC$ be the class of connected graphs in a weakly block-stable class; or equivalently, let $\cC$ be a weakly block-stable class of connected graphs.  With respect to the equivalence relation we introduced earlier,
$\cC_n$ is naturally partitioned into equivalence classes $[G]$.
 We shall show that $\cC_n$ may be partitioned more finely into parts $\cG$, so that if $G \in \cC$ has $k$ blocks and $\cG$ is a part contained in $[G]$,
then there is a bijection between $\cG$ and $[n]^{k-1}$, similar to that in Pr$\ddot{\rm u}$fer coding, see for example the book by van Lint and Wilson~\cite{vw01}.
The encoding that we use here is essentially the same as that introduced by Kajimoto \cite{k03}. 

Given a connected graph $G$ on vertex set $V=[n]$ with $k$ blocks, we will `explode' $G$ into a tree-like graph $\widetilde{G}$ rooted at vertex $n$. The graph $\widetilde G$ will contain vertex-disjoint copies of the blocks of $G$ (plus one additional root vertex), joined together in a tree structure that indicates how the blocks are joined together in $G$.
See the graphs $H$ and $\tilde{H}$ in Figure 1.

Informally, $\widetilde G$ is constructed as follows: we begin by taking vertex-disjoint copies $B_1,\dots,B_k$ of the $k$ blocks of $G$ and add an extra block containing the single vertex $n$.
Thus we get one copy of a vertex for each block it belongs to, and an additional copy of $n$ (so a vertex appears more than once if and only if it is a cutvertex or $n$).  
For each vertex $j$ that occurs more than once, if $B$ is the block containing $j$ which is nearest to vertex $n$ in $G$, 
we give new labels to the copies of $j$ other than the copy in $B$, and refer to these as `ghost' vertices; and finally, we join vertex $j$ to its corresponding ghost vertices. 

More formally, we apply the following procedure:   
\begin{itemize}
\item 
For each block $B$ of $G$, let $v_B$ be the vertex $n$ if $n$ is in $B$, and otherwise let $v_B$ be the cut-vertex in $B$ which separates $B$ from vertex~$n$.
Let $Q_B = V(B) \setminus \{v_B\}$.  
Note that every vertex other than $n$ appears in exactly one set $Q_B$, so the $k$ sets $Q_B$ partition $[n-1]$.

\item
Relabel the blocks as $B_1,\ldots,B_k$ in some canonical order (say in increasing order of the largest vertex in $Q_B$).
For each $i =1,\ldots,k$, denote $Q_{B_i}$ by $Q_i$ and $v_{B_i}$ by $v_i$
(the vertices $v_i$ need not be distinct).  Additionally, set $Q_{k+1}=\{n\}$.

\item
For each $i=1,\dots,k$, add a new `ghost' vertex $g_i$ to $Q_i$, and add edges from $g_i$ to the neighbours of $v_i$ in $B_i$.
We set $P_i= Q_i \cup \{g_i\}$, and set $P_{k+1}=\{n\}$.  Thus the $P_i$ partition $V(G)\cup\{g_1,\dots,g_k\}$ 
and, for $i=1,\dots k$, $P_i$ induces a copy of~$B_i$.

\item
Finally, we delete all edges between the sets $P_i$, and then add edges $g_iv_i$ for each $i$.
\end{itemize}
Let $\widetilde G$ be the resulting graph, with vertex partition $\cp_G=\{P_1,\dots,P_{k+1}\}$.

The edges $g_iv_i$ join up the $P_i$ in a tree structure encoding the block structure of $G$.  
Observe that each edge $g_i v_i$ is a bridge in $\widetilde{G}$, and if we contract each of the edges $g_i v_i$   we obtain the original graph $G$. If we start with $\widetilde{G}$ and contract each set $P_i$ to a single node $i$ then we form a tree on $[k+1]$, which we denote by $T_G$.
Note that if $G$ has a path with edges in $t+1$ distinct blocks then $T_G$ has a path of length $t$ (as edges in distinct blocks correspond to edges in distinct sets $P_i$, which are contracted into distinct vertices of $T_G$).

Let $\cC$ be a weakly block-stable class of connected graphs. 
Let $G$ be a (fixed) graph in $\cC_n$, and suppose that $G$ has $k \geq 2$ blocks.
Let $\cP_G=\{P_1,\ldots,P_{k+1}\}$.
Let the \emph{explosion neighbourhood} $\cG_G$ of $G$ be the set of all connected graphs $H$ on $[n]$ such that $\cP_H=\cP_G$, and the induced graphs $\widetilde{H}[P_i] = \widetilde{G}[P_i]$ for each $i=1,\ldots,k$. (Note that the labelled graphs $\widetilde{H}[P_i]$ and $\widetilde{G}[P_i]$ are identical, not just isomorphic.) Then for each graph $H$ in $\cG_G$, the blocks of $G$ and $H$ are the same up to isomorphism (although they may be attached to each other differently); and thus $H \sim G$, $H$ is in $\cC_n$ and $\cG_G \subseteq [G] \subseteq \cC_n$.   Thus $[G]$ is partitioned into disjoint explosion neighbourhoods.
Also, notice that if $H$ is in $\cG_G$ and the trees $T_H$ and $T_G$ are the same, then the only differences between $\widetilde{H}$ and $\widetilde{G}$ are the choices of `external' neighbours for the ghost vertices $g_i$.
Recall that $v_i$ is the neighbour of $g_i$ outside $P_i$ in $\widetilde{G}$. If $v_i$ is in $P_j$  (and so $v_i$ is in $Q_j$) 
then in $\widetilde{H}$ we may have any vertex in $Q_j$ as neighbour of $g_i$. 

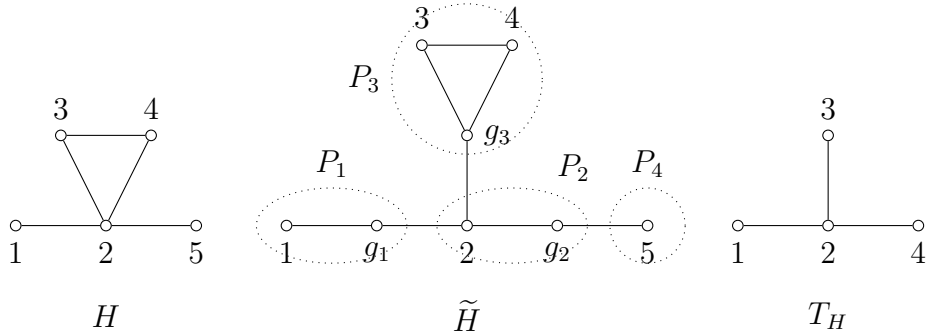
\begin{figure}
\begin{tikzpicture}
  [scale=.6,auto=left,
vx/.style={circle,draw,minimum size=4pt,inner sep=0pt},
capt/.style={rectangle,minimum size=4pt,inner sep=0pt}]

\node (n1) at (1,1) [vx, label=below:1] {};
\node (n2) at (3,1) [vx, label=below:2] {};
\node (n3) at (2,3) [vx, label=above:3] {};
\node (n4) at (4,3) [vx, label=above:4] {};
\node (n5) at (5,1) [vx, label=below:5] {};

\node at (3,-1) [capt] {$H$}; 

\foreach \from/\to in {n1/n2,n2/n3,n3/n4,n4/n2,n2/n5}
\draw (\from) -- (\to);

\node (o1) at (7,1)  [vx, label=below:1] {};
\node (og1) at (9,1)  [vx, label=below:$g_1$] {};

\node (o2) at (11,1)  [vx, label=below:2] {};
\node (og2) at (13,1)  [vx, label=below:$g_2$] {};

\node (og3) at (11,3)  [vx, label=right:$g_3$] {};
\node (o3) at (10,5)  [vx, label=above:3] {};
\node (o4) at (12,5)  [vx, label=above:4] {};

\node (o5) at (15,1)  [vx, label=below:5] {};

\draw[dotted] (8,1) 
node[ellipse, minimum height=1cm,minimum width=2cm,draw,label=above:$P_1$] {};
\draw[dotted] (12,1) 
node[ellipse, minimum height=1cm,minimum width=2cm,draw,label=above right:$P_2$] {};
\draw[dotted] (11,4.25) 

node[ellipse, minimum height=2cm,minimum width=2cm,draw,label=left:$P_3$] {};
\draw[dotted] (15,1) 
node[ellipse, minimum height=1cm,minimum width=1cm,draw,label=above:$P_4$] {};

\node at (11,-1) [capt] {$\widetilde H$}; 

\foreach \from/\to in {o1/og1,og1/o2,o2/og2,og2/o5,o2/og3,og3/o3,o3/o4,o4/og3}
\draw (\from) -- (\to);

\node (p1) at (17,1)  [vx, label=below:1] {};
\node (p2) at (19,1)  [vx, label=below:2] {};
\node (p3) at (19,3)  [vx, label=above:3] {};
\node (p4) at (21,1)  [vx, label=below:4] {};

\node at (19,-1) [capt] {$T_H$}; 

\foreach \from/\to in {p1/p2,p2/p3,p2/p4}
\draw (\from) -- (\to);

\end{tikzpicture}

\caption{{\em Construction of $\widetilde H$ and $T_H$ from $H$.  
The graph $H$ has three blocks, giving $Q_1=\{1\}$,  $Q_2=\{2\}$, $Q_3=\{3,4\}$
and $v_1=v_3=2$, $v_2=5$.  Note that in $\widetilde H$ the ghost vertices $g_1$ and $g_3$ are clones of 2, and $g_2$ is a clone of 5.}
}
\end{figure}

Further, suppose that we start from $\cP_G=\{P_1,\ldots,P_{k+1}\}$, and construct a graph $K$ as follows. For each $i=1,\ldots,k$ we choose any neighbour $u_i$ for $g_i$ such that
\begin{itemize}
  \item $u_i \in P_j$ for some $j \neq i$ and $u_i$ is not a ghost vertex (that is, `$u_i \in Q_j$'), and
  \item the graph obtained from $K$ by contracting each $P_i$ to a single node $i$ is a tree $T$ on $[k+1]$.
\end{itemize}
Then the graph $H$ obtained from $K$ by contracting each newly added edge $g_iu_i$ is in $\cG_G$, $K$ is the tree-like graph $\tilde{H}$ corresponding to $H$, and $T$ is the corresponding tree $T_H$.

The distinct parts $\cG_G$ for $G \in \cC_n$ partition $\cC_n$, and so it will suffice for us to fix one graph $G \in \cC_n$ where $G$ has $k \geq 2$ blocks, and consider the part $\cG_G$. 
We shall see that there is a natural bijection between $\cG_G$ and $[n]^{k-1}$, obtained by a slight extension of Pr$\ddot{\rm u}$fer coding.
Recall that the Pr\"ufer coding of a labelled tree $T$ is obtained by repeatedly deleting the leaf with smallest label and recording the label of its neighbour, repeating until two vertices remain; this gives a bijection between trees on $[n]$ and elements of $[n]^{n-2}$.
Given a tree $T$ on $[n]$ for some $n \geq 2$ let ${\bf t} = {\bf t}(T) \in [n]^{n-2}$ denote its
Pr$\ddot{\rm u}$fer codeword; and given ${\bf t} \in [n]^{n-2}$ let $T=T({\bf t})$ be the corresponding tree.

For a graph $H \in \cg_G$, let us consider the tree-like graph $\widetilde{H}$ and the tree $T_H$ on $[k+1]$.  We construct a codeword ${\bf x}_H= (x_1,\ldots,x_{k-1}) \in [n]^{k-1}$ as follows:
if $i$ is the leaf of $T_H$ with smallest label, and $j$ is the neighbour of $i$ in $T_H$, then let $x_1$ be the neighbour 
of $g_i$ in $P_j$, record $x_1$, and delete vertex $i$; repeat to find $x_2$ (if $k \geq 3$), and continue until two vertices remain.  In the example in Figure 1, ${\bf x}_H=(x_1,x_2)=(2,2)$. 
Further, let $f:[n] \to [k+1]$ be given by setting $f(i)=j$ if vertex $i$ is in $P_j$.
If ${\bf x}_H=(x_1,\ldots,x_{k-1})$ then the Pr$\ddot{\rm u}$fer codeword ${\bf t}(T_H)$ is  $(f(x_1),\ldots,f(x_{k-1}))$.

Just as Pr$\ddot{\rm u}$fer coding gives a bijection between trees on $[k+1]$ and vectors in $[k+1]^{k-1}$, so the map $: H \to {\bf x}_H$ gives a bijection as required, between $\cg_G$ and $[n]^{k-1}$.  Also, for each vertex $j$ of $H$, the number of blocks of $H$ containing $j$ is $1+ a(j,{\bf x}_H)$, where $a(j,{\bf x})$ is the number of \emph{appearances} of $j$ in the vector $\bf x$, that is, the number of co-ordinates of ${\bf x}$ which are equal to $j$.

For each $j=1,\ldots,k$ let $w_j = |f^{-1}(j)|= |Q_j|=|P_j|-1$, 
and let $w_{k+1}=1$ 
(thus, for $j=1,\dots,k+1$, $w_j$ is the number of choices for the neighbour $v_i$ of $g_i$ in $Q_j$, and $\sum_j w_j=n$).
Let $T$ be a tree on $[k+1]$, with corresponding codeword ${\bf t}=(t_1,\ldots,t_{k-1}) \in [k+1]^{k-1}$.
Then by the above the number of graphs $H \in \cg_G$ with $T_H=T$ is
\begin{equation} \label{eqn.count}
  \prod_{i=1}^{k-1}w_{t_i} = \prod_{j=1}^{k+1} w_j^{a(j,{\bf t})} = \prod_{j=1}^{k+1} w_j^{d_T(j)-1}.
\end{equation}

Let $n \geq 3$. Consider a connected graph $G$ with vertex set $V=[n]$ and with $k \geq 2$ blocks, and with corresponding explosion neighbourhood $\cg_G$ as above. Let $R \inu \cg_G$.  Consider the corresponding tree-like graph $\widetilde{R}$ and tree $T_R$.  We shall identify the distributions of the extended Pr$\ddot{\rm u}$fer codeword ${\bf x}_R \in [n]^{k-1}$ corresponding to $\widetilde{R}$, and of the Pr$\ddot{\rm u}$fer codeword ${\bf t}(T_R) \in [k+1]^{k-1}$ corresponding to $T_R$.

For ${\bf x}_R$ this is easy: we have already noted that there is a bijection between the graphs in $\cg_G$ and the codewords, and so ${\bf x}_R$ is uniformly distributed over $[n]^{k-1}$.
For ${\bf t}(T_R)$,
let the random variable $X$ take values in $[k+1]$, with $\pr(X=j)=w_j/\sum_{i=1}^{k+1} w_i$, and let ${\bf X}=(X_1,\ldots,X_{k-1})$ where $X_1,\ldots,X_{k-1}$ are independent, each distributed like $X$. Then ${\bf X}$ and ${\bf t}(T_R)$ have the same distribution.  For, by~(\ref{eqn.count}), given a vector ${\bf t}=(t_1\ldots,t_{k-1}) \in [k+1]^{k-1}$, both $\pr({\bf X}={\bf t})$ and $\pr({\bf t}(T_R)={\bf t})$ are proportional to $\prod_{j=1}^{k+1} w_j^{a(j,{\bf t})}$, and they both take the same values ${\bf t}$ so the normalising constants must be the same.



\section{Number of blocks containing a vertex}
\label{sec.blockdeg}


We begin by showing that, for any weakly block-stable class of connected graphs, the block degree sequence of a random graph in the class is stochastically dominated by the degree sequence of a random tree.

\begin{lemma} \label{lem.conn-degree}
Let $\cc$ be a weakly block-stable class of connected graphs,
and let $R_n \inu \cc$. Then  $(\widetilde{d}_{R_n}(v): v \in [n])$ is stochastically at most $(d_{T_n}(v):v \in [n])$, where $T_n \inu \ct$ is a uniformly random tree on~$[n]$.

Indeed, let $G$ be a fixed graph in $\cc_n$ with $k$ blocks, let $\cg_G$ be the explosion neighbourhood of $G$, and let $R \inu \cg_G$. Then 
$(\widetilde{d}_{R}(v): v \in [n])$ is stochastically at most $(d_{T_n}(v):v \in [n])$. Further, 
\begin{equation} \label{eqn.maxdeg0}
  \pr( \widetilde{\Delta}(R) \geq s+1) \leq n \left(\frac{ek}{ns}\right)^s \leq k (e/s)^s.
\end{equation}
\end{lemma}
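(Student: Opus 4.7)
The plan is to use the extended Pr\"ufer coding from Section~\ref{sec.Pcoding} to express both sides in terms of counts of occurrences in an i.i.d.\ uniform vector, then exhibit an explicit coupling.

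First I would reduce to a fixed explosion neighbourhood. Since $\cC_n$ partitions into the parts $\cG_G$, it suffices to prove the sequence-dominance statement for $R\inu\cG_G$ with $G\in\cC_n$ fixed; the global statement for $R_n\inu\cC$ then follows by conditioning on the part containing $R_n$, because the dominating distribution on $T_n$ will not depend on $G$. The case $k=1$ is trivial since then $\widetilde d_R(v)\le 1=d_{T_n}(v)$ whenever the latter makes sense, so I may assume $k\ge 2$.

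Next comes the coupling. From Section~\ref{sec.Pcoding}, for $R\inu\cG_G$ the extended codeword ${\bf x}_R=(X_1,\ldots,X_{k-1})$ is uniform on $[n]^{k-1}$; equivalently, $X_1,\ldots,X_{k-1}$ are i.i.d.\ uniform on $[n]$, and $\widetilde d_R(v)=1+a(v,{\bf x}_R)$ for each $v\in[n]$. Since any connected graph on $n$ vertices has at most $n-1$ blocks, we have $k-1\le n-2$, so we may extend to an i.i.d.\ uniform sequence $X_1,\ldots,X_{n-2}$ on $[n]$ and let $T_n$ be the tree with Pr\"ufer codeword $(X_1,\ldots,X_{n-2})$. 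By the classical Pr\"ufer bijection $T_n\inu\cT$, and $d_{T_n}(v)=1+a(v,(X_1,\ldots,X_{n-2}))$. Because $(X_1,\ldots,X_{k-1})$ is a prefix of $(X_1,\ldots,X_{n-2})$, we get $a(v,{\bf x}_R)\le a(v,(X_1,\ldots,X_{n-2}))$ for every $v\in[n]$ simultaneously, hence $\widetilde d_R(v)\le d_{T_n}(v)$ coordinatewise almost surely. Coordinatewise domination under a coupling implies the claimed stochastic domination of the full sequences.

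Finally, for (\ref{eqn.maxdeg0}) I would apply a union bound and binomial tail estimate. For each fixed $v$, $a(v,{\bf x}_R)\sim\bin(k-1,1/n)$, so
\[
\pr(\widetilde\Delta(R)\ge s+1)\le \sum_{v=1}^{n}\pr\bigl(a(v,{\bf x}_R)\ge s\bigr)\le n\binom{k-1}{s}n^{-s}\le n\left(\frac{ek}{ns}\right)^{s},
\]
using the standard bound $\binom{k-1}{s}\le(e(k-1)/s)^s$. The second inequality in (\ref{eqn.maxdeg0}) follows since $k\le n$ gives $n^{1-s}k^{s-1}\le 1$.

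There is no real obstacle here: once the Pr\"ufer encoding is in hand, the coupling is essentially forced. The only point one must check is the length inequality $k-1\le n-2$ that legitimises extending the codeword of $R$ into a complete Pr\"ufer code for a uniform random tree; this is immediate from the fact that a connected graph on $n\ge 2$ vertices has at most $n-1$ blocks, with equality exactly for trees (in which case both sides of the domination already agree in distribution).
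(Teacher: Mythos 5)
Your proposal is correct and follows essentially the same route as the paper: the uniform extended Pr\"ufer codeword on $[n]^{k-1}$, a prefix coupling with a uniform codeword in $[n]^{n-2}$ justified by $k-1\le n-2$, and a binomial tail plus union bound for~(\ref{eqn.maxdeg0}). The only detail the paper adds is the observation that $(e/x)^x$ is decreasing for $x\ge 1$, so the bound extends from integer $s$ to all real $s\ge 1$, which is needed where the lemma is applied later.
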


\begin{proof}
  It suffices to prove the statements concerning $R \inu \cg_G$. 
  Recall that ${\bf x}_R \inu [n]^{k-1}$. The block-degrees $\widetilde{d}_R(j)$ of the vertices $j$ of $R$ satisfy
\begin{equation} \label{eqn.deg3}
 ( \widetilde{d}_R(1),\ldots,\widetilde{d}_R(n)) = (a(1,{\bf x}_R)+1,\ldots,a(n, {\bf x}_R)+1).
\end{equation}
Now let $T \inu \ct_n$.  Recall that ${\bf t}={\bf t}(T) \inu [n]^{n-2}$, and
\[ ( d_T(1),\ldots,d_T(n)) = (a(1,{\bf t})+1,\ldots,a(n, {\bf t})+1).\]
But  $k-1 \leq n-2$, and so $(\widetilde{d}_{R}(v): v \in [n])$ is stochastically at most $(d_{T_n}(v):v \in [n])$.
Also, by~(\ref{eqn.deg3}), for each integer $s > 0$,
\begin{eqnarray*}
  \pr(\widetilde{d}_R(1) \geq s+1) &=& \pr(a(1,{\bf x}_R) \geq s) = \pr(\bin(k-1,n^{-1})\geq s)\\
  & \leq & \binom{k-1}{s} n^{-s} \leq \left(\frac{ek}{ns}\right)^s.
\end{eqnarray*}
  Thus for each integer $s \geq 1$
\[ \pr( \widetilde{\Delta}(R) \geq s+1) \leq n (ek/ns)^s \leq k(e/s)^s. \]
Finally, since $(e/x)^x$ is decreasing in $x$ for $x \geq 1$ we may drop the assumption that $s$ is integral, to obtain~(\ref{eqn.maxdeg0}).
\end{proof}

Lemma~\ref{lem.conn-degree} proves part (a) of Theorem~\ref{thm.degreesnew}.  The next lemma is a more detailed version of part (b) of that theorem, and will quickly yield that result.

\begin{lemma} \label{lem.maxdegreenew}
 Let $\ca$ be a weakly block-stable class of graphs. Fix a graph $G \in \cA_n$, 
 with a total of $k$ blocks.  
Let $R_n \inu \cA$.
Then, for each real $s \geq 1$ we have
\begin{equation} \label{eqn.maxdeg1}
  \pr( \widetilde{\Delta}(R_{n}) \geq s+ 1 \, | \, R_n \in [G]) \leq k (e/s)^s. 
\end{equation}
\end{lemma}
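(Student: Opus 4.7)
The plan is to reduce to the connected case of Lemma~\ref{lem.conn-degree} by conditioning on the component structure of $R_n$. Write the components of $G$ as $G_1,\ldots,G_m$, where $G_j$ has $k_j$ blocks, so $\sum_j k_j = k$. Any $H \in [G]$ decomposes into components on some partition $V_1,\ldots,V_m$ of $[n]$, each $\sim$-equivalent to some $G_{\pi(i)}$ under a bijection $\pi$.

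Condition on the \emph{profile} of $R_n$, namely the partition $V_1,\ldots,V_m$ together with the $\sim$-equivalence class of the component on each $V_i$. Because $\cA$ is weakly block-stable, every graph on $[n]$ with any such profile automatically lies in $[G]$, so the event $\{R_n \in [G]\}$ restricted to a given profile splits as a product of independent events on disjoint vertex sets. Hence, conditional on the profile, the components $R_n^{(1)},\ldots,R_n^{(m)}$ are independent, and $R_n^{(i)}$ is uniform over the connected graphs on $V_i$ that are $\sim$-equivalent to the prescribed $G_{\pi(i)}$. This latter set partitions into explosion neighborhoods $\cG_{G'}$ of graphs $G'$ on $V_i$ with $G' \sim G_{\pi(i)}$, each having $k_{\pi(i)}$ blocks.

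Applying Lemma~\ref{lem.conn-degree} within each such explosion neighborhood (its bound depends only on the number of blocks, not on $n$ or on the particular representative) gives
\begin{equation*}
\pr\bigl(\widetilde{\Delta}(R_n^{(i)}) \geq s+1 \,\big|\, \text{profile}\bigr) \leq k_{\pi(i)}(e/s)^s,
\end{equation*}
with the bound being trivial when $k_{\pi(i)} \leq 1$, since the component is then a single block and $\widetilde{\Delta}(R_n^{(i)}) = 1$. Since $\widetilde{\Delta}(R_n) = \max_i \widetilde{\Delta}(R_n^{(i)})$, the union bound yields
\begin{equation*}
\pr\bigl(\widetilde{\Delta}(R_n) \geq s+1 \,\big|\, \text{profile}\bigr) \leq \sum_{i=1}^m k_{\pi(i)}(e/s)^s = k(e/s)^s,
\end{equation*}
and averaging over profiles gives the required bound conditional on $R_n \in [G]$. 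The main obstacle is the bookkeeping for the component profile so that the components become conditionally independent --- which is precisely where weak block-stability of $\cA$ enters, guaranteeing that the class of graphs with a given profile factorises as a product over the pieces $V_i$.
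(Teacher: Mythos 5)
Your proof is correct and follows essentially the same route as the paper: decompose $R_n$ conditioned on $R_n\in[G]$ into its components, observe that the event $\{R_n\in[G]\}$ factorises over the (fixed) vertex sets of the components so that each component is uniform over the relevant equivalence class, apply Lemma~\ref{lem.conn-degree} (via its explosion-neighbourhood form) to each component, and take a union bound over the $k_i$ with $\sum_i k_i=k$. The only cosmetic difference is your conditioning on a ``profile'': since $\sim$ for connected graphs requires equality of vertex sets, the partition and the $\sim$-classes of the components are already determined by $[G]$, so this conditioning is vacuous (and harmless).
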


Life would have been tidier if there had been a detailed stochastic dominance result here corresponding to that in Lemma~\ref{lem.conn-degree} involving a random tree - but unfortunately that is not the case.  For example, let $\ca$ be the class of forests, let $n=6$, let $G \in \ca_6$ have two components both of which are paths of length 2, and let $R_n \inu [G]$.
Let $\cp$ be the increasing set in $\{0,1,\ldots\}^6$ where ${\bf x} \in \cp$ when we can partition the set $[6]$ of co-ordinates into two 3-sets $I$ and $J$ such that both $\sum_{i \in I} x_i \geq 4$ and $\sum_{j \in J} x_j \geq 4$.
Then the probability that the (block) degree sequence of $R_n$ is in $\cp$ is 1, but the probability that the degree sequence of $T_n$ is in $\cp$ is $<1$,
since for example $T_n$ can be a star.
Thus here (with $n=6$) it is not true that $(d_{R_n}(v): v \in [n])$ is stochastically at most $(d_{T_n}(v): v \in [n])$.

However, we can use the stochastic dominance in Lemma~\ref{lem.conn-degree} `component by component'.


\begin{proof}[Proof of Lemma~\ref{lem.maxdegreenew}]

Let $\cC$ be the class of connected graphs in $\cA$.
Suppose that the graph $G \in \cA_n$ has components $G_1,\ldots,G_j$ for some $j \geq 1$.  For each $i=1,\ldots,j$ let $W_i=V(G_i)$.
Suppose that $G_i$ has $k_i$ blocks, and observe that $\sum_ik_i =k$.  Let $\cE$ be the set of graphs on $[n]$ with no edges between distinct sets $W_i$ and $W_{i'}$. Then for a graph $H$ on $[n]$, $H \in [G]$ iff $H \in \cE$ and $H[W_i] \in [G_i]$ for each $i$.

For each $i$, let the random graph $S_{i}$ be uniformly distributed over the graphs in $[G_i]$. 
Then
\begin{align*}
    \pr(\widetilde{\Delta}(R_n[W_i]&) \geq s\!+\!1 \, |\, R_n \in [G]))\\
  & = 
    \pr(\widetilde{\Delta}(R_n[W_i]) \geq s\!+\!1 \,|\, \{R_n[W_i] \in [G_i]\} \cap \{R_n \in \ce\})\\
  & = 
    \pr(\widetilde{\Delta}(S_i) \geq s\!+\!1) \;\;\; \leq \;\;  k_i (e/s)^s
\end{align*} 
  by Lemma~\ref{lem.conn-degree}.
Hence, by the union bound 
\begin{align*}
    \pr(\widetilde{\Delta}(R_n) \geq s\!+\!1 \, |\, R_n \in [G])
  & \leq 
    \sum_{i=1}^{j} \pr(\widetilde{\Delta}(R_n[W_i]) \geq s\!+\!1 \,|\, R_n \in [G]) \\
     & \leq 
  \sum_{i=1}^{j} k_i (e/s)^s 
     \;\; = \;\; k (e/s)^s 
\end{align*} 
as required.
\end{proof}  
\medskip

We may now complete the proof of Theorem~\ref{thm.degreesnew}.

\begin{proof}[Proof of Theorem~\ref{thm.degreesnew}]
It remains to prove 
part (b) of the theorem.  Let $G \in \cA_n$ have 
$k$ blocks.  It suffices to show that~(\ref{eqn.degk}) (and thus~(\ref{eqn.deg1})) and~(\ref{eqn.deg2}) hold for $R_n$ conditioned on $R_n \in [G]$.
%
To see this, we use the last lemma: set $s+1= (1+\eps) \log k/ \log\log k$ to deduce~(\ref{eqn.degk}), and $s+1=cn/\log n$ to deduce~(\ref{eqn.deg2}).
\end{proof}




\section{Path lengths}
\label{sec.pathlengths}

Let $Q(t)$ denote the class of graphs $G$ which have a path containing edges in at least $t$ different blocks.  Thus a forest is in $Q(t)$ if and only if it has a path of length at least~$t$.  We first consider connected graphs.
%
\begin{lemma} \label{lem.conn-path}
  Let $\cC$ be a weakly block-stable class of connected graphs, let $G$ be a fixed graph in $\cC_n$ with $k$ components, let $\cG_G$ be the explosion neighbourhood of $G$, and let $R \inu \cG_G$. Then for each $t \geq 0$, 
  \begin{equation} \label{eqn.qboundc}
    \pr(R \in Q(t+2)) \leq 2 k^2 e^{-\frac{t^2}{2(k+1)}}. 
  \end{equation}
\end{lemma}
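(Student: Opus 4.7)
The plan is to reduce to bounding the diameter of $T_R$ and then apply a first-moment argument. By the construction in Section~\ref{sec.Pcoding}, if $R \in \cg_G$ has a path whose edges span at least $t+2$ distinct blocks, then $T_R$ contains a path of length at least $t+1$; hence $\pr(R \in Q(t+2)) \le \pr(\mathrm{diam}(T_R) \ge t+1)$, and it suffices to bound the latter by $2k^2 e^{-t^2/(2(k+1))}$. Let $N_\ell$ denote the number of (unordered) paths with exactly $\ell$ edges in $T_R$. Trimming a diameter-realizing path produces a length-$\ell$ witness, so $\mathrm{diam}(T_R) \ge \ell$ forces $N_\ell \ge 1$, and by Markov's inequality $\pr(\mathrm{diam}(T_R) \ge t+1) \le \E[N_{t+1}]$.

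The key ingredient is the identity
\[
\pr(P \subseteq T_R) \;=\; \frac{W_P}{n}\,\prod_{i=1}^{\ell-1} \frac{w_{u_i}}{n}, \qquad W_P := \sum_{i=0}^\ell w_{u_i},
\]
for every ordered path $P = u_0, u_1, \ldots, u_\ell$ of distinct vertices in $[k+1]$. By Section~\ref{sec.Pcoding}, $T_R$ is a random tree on $[k+1]$ whose Pr\"ufer codeword has i.i.d.\ entries with $\pr(t_i = j) = w_j/n$, and the total weighted tree count is $n^{k-1}$. To prove the identity I would decompose each tree $T \supseteq P$ into the path together with the $\ell+1$ subtrees $S_0, \ldots, S_\ell$ rooted at $u_0, \ldots, u_\ell$ (so $u_i \in V(S_i)$ and these sets partition $[k+1]$); by the weighted Cayley formula $\sum_{T'} \prod_j w_j^{d_j(T')-1} = W^{|V|-2}$ applied to each part, the contribution of a fixed partition is $\prod_i w_{u_i}^{\varepsilon_i} W_i^{m_i-2}$, where $m_i = |V(S_i)|$, $W_i = \sum_{j \in V(S_i)} w_j$, and $\varepsilon_i \in \{1,2\}$ records whether $u_i$ is an endpoint or an interior vertex of $P$. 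Summing over all compatible partitions collapses to the closed form via the Abel-type identity $\sum_{(V_0, \ldots, V_\ell)} \prod_i W_i^{m_i-2} w_{u_i} = n^{k-\ell-1} W_P$.

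Given the identity, using $W_P/n \le 1$ and that each unordered path is counted twice,
\[
2 \E[N_\ell] \;\le\; \sum_{(u_0, \ldots, u_\ell) \text{ distinct in } [k+1]} \,\prod_{i=1}^{\ell-1} \frac{w_{u_i}}{n}.
\]
Separating the $(k-\ell+2)(k-\ell+1)$ ordered choices for the endpoints $(u_0, u_\ell)$ and bounding the remaining sum $(\ell-1)!\, e_{\ell-1}(w_1/n, \ldots, w_{k+1}/n)$ by Maclaurin's inequality, I obtain
\[
\E[N_\ell] \;\le\; \frac{(k+1)^2}{2} \prod_{i=1}^\ell \Big(1 - \frac{i}{k+1}\Big) \;\le\; \frac{(k+1)^2}{2}\, \exp\!\Big(\!-\frac{\ell(\ell+1)}{2(k+1)}\Big).
\]
Setting $\ell = t+1$ and using $(t+1)(t+2) \ge t^2$ together with $(k+1)^2 \le 4k^2$ for $k \ge 1$ (the cases $k \le 1$ being trivially bounded since $T_R$ has at most $k+1$ vertices) gives the claimed bound.

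The main obstacle is establishing the key identity for $\pr(P \subseteq T_R)$. While the path-plus-subtrees decomposition is natural, the collapse of $\sum \prod_i W_i^{m_i-2} w_{u_i}$ over vertex partitions with $u_i \in V_i$ to the compact form $n^{k-\ell-1} W_P$ is a weighted generalization of the classical count of spanning trees containing a prescribed spanning forest; it can be proved by induction on $\ell$ starting from the base case $\pr(uv \in T_R) = (w_u + w_v)/n$, or directly via an Abel summation identity. Once this step is in hand, the rest is routine Maclaurin manipulation.
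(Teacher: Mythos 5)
Your proposal is correct, but it takes a genuinely different route from the paper. The paper does not compute path-containment probabilities at all: it works directly with the Pr\"ufer codeword of $T_R$, observing that if $\dist(m\!-\!1,m;T(\mathbf{x}))\geq t+1$ then the last $t$ coordinates of $\mathbf{x}$ must be distinct, bounding the probability that $t$ i.i.d.\ variables are distinct by $(m)_t/m^t$ (its Lemma~\ref{lem.distinct}), then extending from the special pair $(m\!-\!1,m)$ to an arbitrary pair $(i,j)$ by a permutation-symmetry argument that conditions on the degree sequence of $T(\mathbf{X})$, and finally taking a union bound over $\binom{m}{2}$ pairs. You instead derive the exact formula $\pr(P\subseteq T_R)=\frac{W_P}{n}\prod_{i=1}^{\ell-1}\frac{w_{u_i}}{n}$ and run a first-moment computation with Maclaurin's inequality. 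I checked your identity independently: contracting the path $P$ to a single super-vertex of weight $W_P$ and applying the weighted Cayley formula $\sum_T\prod_j w_j^{d_T(j)-1}=(\sum_j w_j)^{N-2}$ to the resulting weighted vertex set gives $\sum_{T\supseteq P}\prod_j w_j^{d_T(j)-1}=W_P\prod_{i=1}^{\ell-1}w_{u_i}\cdot n^{k-1-\ell}$, which is exactly your formula after dividing by the total weight $n^{k-1}$; this contraction argument is rather cleaner than summing your Hurwitz/Abel-type identity over vertex partitions, and I would recommend it as the way to close the one step you flag as the main obstacle. Your subsequent bookkeeping is sound: $2\E[N_\ell]\leq(k\!-\!\ell\!+\!2)(k\!-\!\ell\!+\!1)\cdot(\ell\!-\!1)!\,e_{\ell-1}$, Maclaurin gives $(\ell\!-\!1)!\,e_{\ell-1}\leq\prod_{i=0}^{\ell-2}(1-\tfrac{i}{k+1})$, and the endpoint factor extends the product to $i=\ell$, yielding $\E[N_{t+1}]\leq\tfrac{(k+1)^2}{2}e^{-(t+1)(t+2)/(2(k+1))}\leq 2k^2e^{-t^2/(2(k+1))}$ for $k\geq 1$ (and $k\leq 1$ is vacuous since $Q(t+2)$ needs two blocks). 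What each approach buys: the paper's argument needs only elementary facts about i.i.d.\ samples plus the symmetry lemma, whereas yours requires the weighted tree-enumeration identity but in return gives exact two-point (indeed edge- and path-) marginals of $T_R$ and a slightly stronger exponent, $(t+1)(t+2)$ in place of the paper's $t(t-1)$. One cosmetic remark: the lemma statement's ``$k$ components'' is a typo for ``$k$ blocks'', and you have (correctly) read it that way.
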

  %
  %
  %

In order to prove Lemma~\ref{lem.conn-path} we need two lemmas: the first preliminary lemma may well be known but we give a proof for completeness.
\begin{lemma} \label{lem.distinct}
  Let $2 \leq j \leq n$, and let $X_1,\ldots,X_j$ be iid random variables taking values in $[n]$.
Then (a) the probability that $X_1$ is not repeated is at most $(1-1/n)^{j-1}$; 
and (b) the probability that $X_1,\ldots,X_j$ are all distinct is at most $(n)_j/n^j$.
\end{lemma}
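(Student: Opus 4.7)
The plan is to handle the two parts separately: (b) follows immediately from a standard symmetric-function inequality, while (a) requires a small variational argument on the simplex.

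For (b), I would write
\[
\pr(X_1,\ldots,X_j \text{ all distinct}) \;=\; \sum_{\text{distinct } (i_1,\ldots,i_j)} p_{i_1}\cdots p_{i_j} \;=\; j!\,e_j(\mathbf{p}),
\]
where $e_j$ denotes the $j$-th elementary symmetric polynomial in $\mathbf{p}=(p_1,\ldots,p_n)$ with $p_i = \pr(X = i)$. Maclaurin's inequality gives $e_j(\mathbf{p}) \le \binom{n}{j}(e_1(\mathbf{p})/n)^j = \binom{n}{j}/n^j$ (using $\sum p_i = 1$), and multiplying by $j!$ yields the claimed bound $(n)_j/n^j$.

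For (a), conditioning on $X_1$ and using independence gives
\[
\pr(X_1 \text{ not repeated}) \;=\; \sum_i p_i(1-p_i)^{j-1} \;=:\; F(\mathbf{p}),
\]
and I would prove $F(\mathbf{p}) \le (1-1/n)^{j-1}$ by induction on $n$, the inductive step combining a critical-point analysis with a boundary argument. Lagrange multipliers on the constraint $\sum p_i = 1$ yield the interior condition $\phi(p_i) = \lambda$ for all $i$, where $\phi(p) := (1-p)^{j-2}(1-jp)$. A quick study of $\phi$ shows it is strictly decreasing on $[0,2/j]$ and strictly increasing on $[2/j,1]$, with $\phi>0$ exactly on $[0,1/j)$. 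Hence each level set of $\phi$ has at most two elements, and a two-valued interior critical point would require both values to lie in $(1/j,1)$, so that $\sum p_i > n/j \ge 1$ whenever $j\le n$---contradicting $\sum p_i=1$. Thus uniform is the unique interior critical point, giving $F = (1-1/n)^{j-1}$ there.

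On the boundary (support of size $k+1<n$), I would split into two sub-cases. If $j \le k+1$, the inductive hypothesis applied to the smaller problem yields $F \le (1-1/(k+1))^{j-1} < (1-1/n)^{j-1}$. If $j > k+1$, the pointwise bound $p(1-p)^{j-1} \le \tfrac{1}{j}(1-1/j)^{j-1}$ (the maximum of this function on $[0,1]$), summed over the $k+1 \le j$ non-zero coordinates, gives $F \le \tfrac{k+1}{j}(1-1/j)^{j-1} \le (1-1/j)^{j-1} \le (1-1/n)^{j-1}$ (using $k+1 \le j \le n$). The main obstacle is the latter sub-case, which cannot be reached by naive induction but is nevertheless settled cleanly by the pointwise maximum; everything else is routine calculus.
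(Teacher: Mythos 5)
Your proof is correct, but it takes a genuinely different route from the paper's, most visibly in part (b). The paper proves (b) by induction on $j$: it conditions on the value of $X_1$ and on the event $A_i$ that none of $X_2,\ldots,X_j$ equals $i$, so that part (a) contributes the factor $(1-1/n)^{j-1}$ and the induction hypothesis (applied to the conditional law of $X_2,\ldots,X_j$, which is i.i.d.\ on an $(n-1)$-point set) contributes $(n-1)_{j-1}/(n-1)^{j-1}$; the product is exactly $(n)_j/n^j$. Your identity $\pr(X_1,\ldots,X_j \mbox{ distinct})=j!\,e_j(\mathbf{p})$ combined with Maclaurin's inequality is shorter and makes (b) logically independent of (a) (which is all the paper actually uses later), at the cost of invoking a named symmetric-function inequality rather than staying with bare-hands induction. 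For (a) both arguments are variational maximizations of $g(\mathbf{p})=\sum_i p_i(1-p_i)^{j-1}$ over the simplex, but the details differ: the paper first shows by a local perturbation that a maximizer has all coordinates at most $1/j$, and then concludes that the maximizer is uniform from the strict concavity of $f(x)=x(1-x)^{j-1}$ on $[0,2/j]$ (the paper's displayed $f''$ has its sign reversed and says ``convexity'' where concavity is meant, but that is the intended step --- your observation that $\phi=f'$ is strictly decreasing on $[0,2/j]$ carries the correct sign); your route via Lagrange multipliers, the two-point level sets of $f'$, and an explicit treatment of the boundary of the simplex (induction on the support size together with the pointwise bound $f(x)\le f(1/j)$) is longer but handles the case of vanishing coordinates more explicitly than the paper does. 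Both arguments are sound, so there is nothing to fix; the main thing your version buys is a self-contained part (b), and the main thing the paper's buys is that it needs no machinery beyond part (a) itself.
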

\begin{proof}
  Denote $\pr(X_1 =i)$ by $p_i$ for $i=1,\ldots,n$, and set ${\bf p}=(p_1,\dots,p_n)$.

  (a) The probability that $X_1$ is not repeated is $g({\bf p}) = \sum_{i=1}^n p_i(1-p_i)^{j-1}$.
Suppose first that $j=2$.  Then $g({\bf p}) = 1- \sum_{i=1}^n p_i^2 \leq 1- 1/n$ since, as is well known, $\sum_{i=1}^n p_i^2$ is minimised when each $p_i=1/n$.  

Now suppose that $j \geq 3$.  Let $f(x)=x(1-x)^{j-1}$ for $0\leq x \leq 1$. Then $g({\bf p}) = \sum_{i=1}^n f(p_i)$. Let $m$ be the maximum value of this quantity, achieved at ${\bf q} = (q_1,\ldots,q_n)$.  Now $f'(x)= (1-x)^{j-2} (1-jx)$, which is $>0$ for $0<x<1/j$,
 $=0$ at $x=1/j$ and $<0$ for $1/j<x<1$.  Also $f''(x) = (j-1)(1-x)^{j-3}(2-jx)$, which is $>0$ for $0<x<2/j$.

Clearly each $q_i \in [0,1)$.
%
If $q_i>1/j$ for some $i$ then there is $k$ with $q_k<1/j$ (as $\sum_kq_k=1$); increasing $q_i$ and decreasing $q_k$ slightly would then increase $g({\mathbf q})$.  We must therefore have $\max_iq_i\le 1/j$, and so by (strict) convexity $g({\mathbf q})$ is (uniquely) maximized when all the $q_i$ take the same value, which must be $1/n$.  Hence $m = (1-1/n)^{j-1}$, which completes the proof of (a).
\smallskip

  (b)
Consider any positive integer $n$.  The result is trivially true for $j=1$.  Let $2 \leq j \leq n$ and suppose that it holds for $j-1$.
Let $A_i$ be the event that none of $X_2,\ldots,X_j$ are equal to $i$. Then by conditioning on $X_1$ and using the induction hypothesis, we find
\begin{align*}
  \pr (X_1,\ldots,X_j \mbox{ distinct})
  &=
  \sum_{i=1}^n \pr(X_1=i, A_i) \ \pr(X_2,\ldots,X_j \mbox{ distinct} | A_i)\\
 & \leq 
  \sum_{i=1}^n \pr(X_1=i, A_i) \ \frac{(n-1)_{j-1}}{(n-1)^{j-1}}\\
 &=
   \pr(X_1 \mbox{ not repeated }) \ \frac{(n-1)_{j-1}}{(n-1)^{j-1}}\\
 & \leq 
  \left (\frac{n-1}{n}\right)^{j-1}  \frac{(n-1)_{j-1}}{(n-1)^{j-1}} \;  = \; \frac{(n)_j}{n^j}
\end{align*}
as required.
\end{proof}
\medskip

\begin{lemma} \label{lem.pathlength}
Let $m \geq 3$ and let $w_1,\ldots,w_m >0$. Let the random variable $X$ take values in $[m]$, with $\pr(X=j)=w_j/\sum_{i=1}^{m} w_i$. Let ${\bf X}=(X_1,\ldots,X_{m-2})$ where $X_1,\ldots,X_{m-2}$ are independent, each distributed like $X$.
Consider the random tree $T({\bf X})$ on $[m]$.
For each integer $t \geq 1$, the expected number of paths of length at least $t+1$ is at most 
\[ 
\binom{m}{2} e^{-\binom{t}{2}/m} \leq 2(m-1)^2 e^{-\frac{t^2}{2m}}.
\]
\end{lemma}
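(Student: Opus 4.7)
The plan is to first pin down the distribution of the tree $T(\mathbf{X})$ enough to write down the probability that a specified path lies inside it, and then to relate the number of long paths to an expected count of ordered $(t+2)$-tuples.

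For a path $P=v_0v_1\cdots v_\ell$ on $\ell+1$ distinct vertices of $[m]$, I would first establish the formula
\[
\pr(P\subseteq T(\mathbf{X})) \;=\; \Bigl(\sum_{i=0}^{\ell} p_{v_i}\Bigr)\prod_{i=1}^{\ell-1} p_{v_i},\qquad p_j=w_j/W.
\]
This is the weighted version of Cayley's formula for the number of spanning trees extending a prescribed forest: if $F$ on $[m]$ has components $C_1,\ldots,C_c$ of total weight $W_i=\sum_{v\in C_i}w_v$, then $\sum_{T\supseteq F}\prod_v w_v^{d_T(v)-1}=W^{c-2}\prod_i W_i\prod_v w_v^{d_F(v)-1}$; applying this with $F=E(P)$ (whose components are $V(P)$, of total weight $\sum_{i=0}^\ell w_{v_i}$, and $m-\ell-1$ singletons) and normalising by $W^{m-2}$, nearly everything cancels. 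Alternatively one verifies it directly from the bijection between trees and Pr\"ufer codewords by counting codewords extending $P$. Since the $v_i$ are distinct, $\sum_i p_{v_i}\le 1$, so $\pr(P\subseteq T)\le \prod_{i=1}^{\ell-1}p_{v_i}$.

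Next, in any tree the paths of length at least $t+1$ are in bijection with the unordered vertex pairs at distance at least $t+1$. For each such pair $\{u,v\}$ at distance $\ell\ge t+1$, the unique $u$--$v$ path yields $2(\ell-t)\ge 2$ ordered $(t+2)$-tuples of distinct vertices $(u_0,\ldots,u_{t+1})$ whose consecutive pairs are all edges of $T$ (pick $t+1$ consecutive edges of the path, in either orientation). Hence
\[
\E[\#\text{paths of length}\ge t+1]\;\le\; \tfrac12 \sum_{(u_0,\ldots,u_{t+1})\text{ distinct}} \pr(u_0u_1\cdots u_{t+1}\subseteq T)\;\le\; \tfrac12 \sum_{\text{distinct}}\prod_{i=1}^{t} p_{u_i}.
\]

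Now I would fix the endpoints $u_0,u_{t+1}$, giving at most $m(m-1)$ ordered choices, and for each such pair sum the inner weight $\prod_{i=1}^t p_{u_i}$ over $u_1,\ldots,u_t$ distinct from each other; dropping the extra constraint that they avoid $\{u_0,u_{t+1}\}$ only enlarges the sum. The resulting inner sum is exactly $\pr(Y_1,\ldots,Y_t\text{ all distinct})$ where the $Y_i$ are i.i.d.\ of law $p$ on $[m]$. By Lemma~\ref{lem.distinct}(b) this is at most $(m)_t/m^t=\prod_{i=0}^{t-1}(1-i/m)\le \exp(-\binom{t}{2}/m)$, yielding the main bound $\binom{m}{2}e^{-\binom{t}{2}/m}$. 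The secondary inequality $\binom{m}{2}e^{-\binom{t}{2}/m}\le 2(m-1)^2 e^{-t^2/(2m)}$ is then a routine check: $\binom{m}{2}\le(m-1)^2$ for $m\ge 2$, and $e^{-\binom{t}{2}/m}=e^{t/(2m)}e^{-t^2/(2m)}\le 2 e^{-t^2/(2m)}$ whenever $t\le 2m\ln 2$; for larger $t$ a tree on $m$ vertices has no path of length $t+1$, so the expectation is zero and the stated bound holds trivially. The delicate step is the path-in-tree probability formula, which needs the weighted forest-extension count; everything after that is careful but routine tuple counting, and the main thing to watch is the two-to-one correspondence between long paths and consecutive $(t+2)$-tuples so as not to miscount.
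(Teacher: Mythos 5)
Your first step---the exact formula $\pr(P\subseteq T(\mathbf{X}))=\bigl(\sum_{i}p_{v_i}\bigr)\prod_{i=1}^{\ell-1}p_{v_i}$ via the weighted forest-extension version of Cayley's formula---is correct and is a genuinely different (and arguably slicker) route than the paper's, which instead runs the Pr\"ufer algorithm to show that $\dist(m\!-\!1,m;T(\mathbf{x}))\ge t+1$ forces the last $t$ coordinates of $\mathbf{x}$ to be distinct, and then transfers this to arbitrary pairs by a symmetry argument. But your second step contains a genuine error. The quantity $\tfrac12\sum_{(u_0,\dots,u_{t+1})\text{ distinct}}\pr(u_0\cdots u_{t+1}\subseteq T)$ is exactly the expected number of (unordered) paths of length \emph{exactly} $t+1$ in $T$, and this does \emph{not} dominate the expected number of paths of length \emph{at least} $t+1$. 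Your double count goes the wrong way: each pair $\{u,v\}$ at distance $\ell\ge t+1$ yields $2(\ell-t)\ge 2$ tuples, but a single tuple can be yielded by many such pairs (in a long path $v_0v_1\cdots v_N$, the tuple $(v_0,\dots,v_{t+1})$ arises from every pair $\{v_0,v_j\}$ with $j\ge t+1$), so you only obtain $\#\{\text{pairs}\}\le\tfrac12\sum_{\text{tuples}}(\text{number of pairs yielding that tuple})$, not $\tfrac12\#\{\text{tuples}\}$. Concretely, a path on $m$ vertices has $m-t-2$ vertex pairs at distance exactly $t+1$ but on the order of $m^2$ pairs at distance at least $t+1$, so the inequality you assert fails badly pointwise, and it also fails in expectation (already for the uniform random tree, $\pr(\dist(1,2)\ge t+1)>\pr(\dist(1,2)=t+1)$ whenever longer distances have positive probability).

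The fix is to work per pair of endpoints rather than per tuple, which is what the paper does: write the number of paths of length $\ge t+1$ as $\sum_{i<j}\mathbf{1}[\dist(i,j;T)\ge t+1]$ and bound $\pr(\dist(i,j;T(\mathbf{X}))\ge t+1)\le (m)_t/m^t$ for each pair. Your path-probability formula can still be used for this, but you must then union-bound over the possible \emph{initial segments} of the $i$--$j$ path (or sum your ``exactly $s+1$'' bounds over all $s\ge t$, which costs roughly a factor $m/t$ and no longer matches the stated bound). Note also that for the downstream application (Lemma~\ref{lem.conn-path} only needs $\pr(R\in Q(t+2))$, i.e.\ the probability that \emph{some} long path exists) your argument would suffice, since a path of length $\ge t+1$ contains one of length exactly $t+1$; but the lemma as stated bounds the expected \emph{number} of long paths, and for that your counting step is not valid.
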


\noindent
Before we prove this lemma, let us note that it will yield Lemma~\ref{lem.conn-path}. To see this, let $G$ have $k$ blocks, and set $m=k+1$ in the last lemma.
Now recall from the end of Section~\ref{sec.Pcoding} that, for a suitable choice of $w_1,\ldots,w_m$, $T_R$ has the same distribution as $T({\bf X})$.  But if $H \in Q(t+2)$ then $T_H$ has a path of length $t+1$.
\medskip

\begin{proof}
We first consider the distance in $T({\bf x})$ between vertices $m-1$ and $m$ using Pr\"ufer coding, and then extend to all pairs of vertices.  Given a tree $T \in \ct_m$ and distinct vertices $i,j \in [m]$, denote the distance between $i$ and $j$ in $T$ by $\dist(i,j;T)$.
We claim that
\begin{equation}\label{claim.m}
\pr(\dist(m\!-\!1,m;T({\bf X})) \geq t+1) \leq e^{-\binom{t}{2}/m}. 
\end{equation}
To prove this, consider any vector ${\bf x} \in [m]^{m-2}$.
If the path between vertices $m-1$ and $m$ in $T({\bf x})$ has length at least $t+1$ then the last $t$ co-ordinates of $\bf x$ are distinct (this follows by considering the algorithm for Pr\"ufer   applied to a tree $T$ on $[m]$: running the algorithm for as long as it removes leaves with labels from $[m-2]$, we are left with the path from $m-1$ to $m$ in $T$; the remaining $t$ steps of the algorithm run through the path, starting from the $m-1$ end,  and record the internal vertices of the path in order).
Hence the probability that the path between vertices $m-1$ and $m$ in $T({\bf X})$ has length at least $t+1$ is at most the probability that the last $t$ of the $X_i$ are distinct,
which is at most $(m)_{t}/m^{t}$ by Lemma~\ref{lem.distinct}.  But
\[ (m)_{t}/m^{t} = \prod_{i=0}^{t-1} (1- i/m) \leq \exp  \left(-\sum_{i=0}^{t-1} i/m\right)
  = e^{-\binom{t}{2}/m}. \] 
which establishes the claim~(\ref{claim.m}).  
 
There is sufficient symmetry for us to be able to use~(\ref{claim.m}) to show that the same bound holds for the distance between an arbitrary pair of vertices. 
We spell this out now.

Let $\pi$ be a permutation of $[m]$.  We denote the image of an element $i \in [m]$ by 
$i^{\pi}$.  Given a vector ${\bf z}=(z_1,\ldots,z_m)$ 
let ${\bf z}^{\pi}$ denote the permuted vector with $(z^{\pi})_i=z_{\pi(i)}$.  Given a tree $T \in \ct_m$, let $T^{\pi}$ denote the tree in $\ct_m$ with an edge $i^{\pi}j^{\pi}$ for each edge $ij$ in $T$, so that $\pi$ is an isomorphism from $T$ to $T^{\pi}$.  Also, given a tree $T \in \ct_m$, let ${\bf d}(T)$ be the degree sequence $(d_T(1),\ldots,d_T(m))$ of $T$.  Thus ${\bf d}(T^{\pi}) = {\bf d}(T)^{\pi^{-1}}$.

Consider distinct vertices $i$ and $j$ in $[m]$.  Let $\pi$ be a (fixed) permutation of $[m]$ with $i^{\pi}=m\!-\!1$ and $j^{\pi}=m$.
Let ${\bf z}=(z_1,\ldots,z_m)$ be a vector of positive integers with $\sum_i z_i=2m-2$.
The permutation $\pi$ yields a bijection $\phi$ from $\{T \in \ct_m: {\bf d}(T)={\bf z}\}$ to $\{T \in \ct_m: {\bf d}(T)={\bf z}^{\pi^{-1}}\}$ which takes
$\{T \in \ct_m: {\bf d}(T)={\bf z}, \dist(i,j;T)=s\}$ to $\{T \in \ct_m: {\bf d}(T)={\bf z}^{\pi^{-1}}, \dist(m\!-\!1,m;T)=s\}$.
Also, ${\bf d}(T({\bf x}))={\bf z}$ iff the number $a(v,{\bf x})$ of appearances of $v$ in ${\bf x}$ is $z_v-1$ for each $v \in [m]$, so conditional on  ${\bf d}(T({\bf X}))={\bf z}$ all trees $T$ with ${\bf d}(T)={\bf z}$ are equally likely, with probability $\left( |\{T \in \ct_m: {\bf d}(T)={\bf z} \}| \right)^{-1}$.  Let $Y_i=(X_i)^{\pi}$ for each $i$, and let ${\bf Y}=(Y_1,\ldots,Y_m)$.  Then
\begin{eqnarray*}
  &&
  \pr(\dist(i,j;T({\bf X}))=s \, | \, {\bf d}(T({\bf X}))={\bf z})\\
  &=&
  \frac{|\{T \in \ct_m: {\bf d}(T)={\bf z}, \dist(i,j;T)=s)| }
  { |\{T \in \ct_m: {\bf d}(T)={\bf z} \}|}\\
   &=&
  \frac{|\{T \in \ct_m: {\bf d}(T)={\bf z}^{\pi^{-1}}, \dist(m\!-\!1,m;T)=s)| }
  { |\{T \in \ct_m: {\bf d}(T)={\bf z}^{\pi^{-1}} \}|}\\
   &=&
  \pr(\dist(m\!-\!1,m;T({\bf Y}))=s \, | \, {\bf d}(T({\bf Y}))={\bf z}^{\pi^{-1}}).
\end{eqnarray*}
Hence, summing over the possible degree sequences ${\bf z}$,
\begin{eqnarray*}
  && \pr(\dist(i,j;T({\bf X}))=s)\\
  &=&
  \sum_{{\bf z}} \pr(\dist(i,j;T({\bf X}))=s \, | \, {\bf d}(T({\bf X}))={\bf z})
  \pr({\bf d}(T({\bf X}))={\bf z})\\
   &=&
  \sum_{{\bf z}}
  \pr(\dist(m\!-\!1,m;T({\bf Y}))=s \, | \, {\bf d}(T({\bf Y}))={\bf z}^{\pi^{-1}})
  \pr({\bf d}(T({\bf Y}))={\bf z}^{\pi^{-1}})\\
  &=&
  \pr(\dist(m\!-\!1,m;T({\bf Y}))=s).
\end{eqnarray*}
Now, since ${\bf Y}$ has a distribution of the same form as ${\bf X}$, we may apply the claim~(\ref{claim.m}) to see that
\[ \pr(\dist(i,j;T({\bf X})) \geq t+1) = \pr(\dist(m\!-\!1,m;T({\bf Y})) \geq t+1) \leq e^{-\binom{t}{2}/m}. \] 
It follows that the expected number of paths in $T({\bf X})$ of length at least $t+1$ is at most
\[ \binom{m}{2} e^{-\binom{t}{2}/m} \leq (m\!-\!1)^2 e^{-\frac{t(t-1)}{2m}}  \leq 2 (m\!-\!1)^2 e^{-\frac{t^2}{2m}}, \]
since we may assume that $t \leq m$ and then $e^{\frac{t}{2m}} \leq e^{\frac12} <2$.
\end{proof}
\medskip

At this point we have completed the proof of Lemma~\ref{lem.conn-path}.  The next lemma is a more detailed version of Theorem~\ref{thm.diamnew}, and will quickly yield that result.  It may be deduced from Lemma~\ref{lem.conn-path} just as Lemma~\ref{lem.maxdegreenew} was deduced from Lemma~\ref{lem.conn-degree}.

\begin{lemma}  \label{lem.pathnew}
  Let $\cA$ be a weakly block-stable class of graphs.  Fix a graph $G \in \cA_n$, with 
a total of $k$ blocks. 
Let $R_n \inu \cA$.  Then for each $t \geq 0$, 
\begin{equation} \label{eqn.qboundd}
    \pr(R_n \in Q(t+2) \, |\, R_n \in [G]) \leq 
    2 k^2 e^{-\frac{t^2}{2(k+1)}}. 
\end{equation}
\end{lemma}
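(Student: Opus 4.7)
The plan is to deduce the statement from Lemma~\ref{lem.conn-path} by reducing to connected components, in exact parallel with how Lemma~\ref{lem.maxdegreenew} was deduced from Lemma~\ref{lem.conn-degree}. Let $\cC$ be the class of connected graphs in $\cA$, and decompose $G$ into its connected components $G_1,\ldots,G_j$ on vertex sets $W_1,\ldots,W_j$, where $G_i$ has $k_i$ blocks and $\sum_i k_i = k$. As in the proof of Lemma~\ref{lem.maxdegreenew}, a graph $H$ on $[n]$ lies in $[G]$ iff $H$ has no edges between distinct $W_i$ and $H[W_i] \in [G_i]$ for every $i$. Consequently, conditional on $R_n \in [G]$, the induced subgraphs $R_n[W_i]$ are mutually independent and each is uniformly distributed on $[G_i]$; call a random graph with this distribution $S_i$.

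The key structural observation I would use next is that any path in a graph lies entirely within a single component, so $R_n \in Q(t+2)$ if and only if $R_n[W_i] \in Q(t+2)$ for some $i$. To bound $\pr(S_i \in Q(t+2))$, recall that $[G_i]$ is partitioned into explosion neighbourhoods $\cG_{G_i'}$ for $G_i' \in [G_i]$. Since Lemma~\ref{lem.conn-path} gives the bound $2 k_i^2 e^{-t^2/(2(k_i+1))}$ on every such part (and since when $k_i \leq 1$ the event $Q(t+2)$ is empty for $t \geq 0$, so the bound holds trivially), averaging over the parts of $[G_i]$ yields
\[
\pr(S_i \in Q(t+2)) \;\leq\; 2 k_i^2 \, e^{-t^2/(2(k_i+1))}.
\]

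Finally, combining via the union bound,
\begin{align*}
\pr(R_n \in Q(t+2) \mid R_n \in [G])
&\leq \sum_{i=1}^j \pr(S_i \in Q(t+2)) \\
&\leq \sum_{i=1}^j 2 k_i^2 \, e^{-t^2/(2(k_i+1))} \\
&\leq 2 \, e^{-t^2/(2(k+1))} \sum_{i=1}^j k_i^2 \\
&\leq 2 k^2 \, e^{-t^2/(2(k+1))},
\end{align*}
where the second-to-last step uses $k_i+1 \leq k+1$, and the last uses $\sum_i k_i^2 \leq (\sum_i k_i)^2 = k^2$. Since the paper flags this deduction as strictly parallel to the earlier one, I do not expect a real obstacle; the only piece requiring any care is the final arithmetic step, where one must combine the per-component bounds, and note that the weakness in replacing $k_i+1$ by $k+1$ inside the exponent is precisely compensated by the bound $\sum k_i^2 \leq k^2$ outside.
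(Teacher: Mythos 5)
Your proposal is correct and follows exactly the route the paper intends: the paper omits the details of this deduction, stating only that it ``may be deduced from Lemma~\ref{lem.conn-path} just as Lemma~\ref{lem.maxdegreenew} was deduced from Lemma~\ref{lem.conn-degree}'', and your argument is a faithful execution of that template (componentwise conditioning, the per-component bound from Lemma~\ref{lem.conn-path} averaged over explosion neighbourhoods, then a union bound). The final arithmetic, using $k_i+1\leq k+1$ inside the exponent and $\sum_i k_i^2\leq k^2$ outside, is exactly the right way to assemble the stated constant.
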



We may now complete the proof of  Theorem~\ref{thm.diamnew}, much as we did for Theorem~\ref{thm.degreesnew}.

\begin{proof}[Proof of Theorem~\ref{thm.diamnew}]
Let $G \in \cA_n$ have $k$ blocks.  It suffices to prove the theorem for $R_n$ conditioned on $R_n \in [G]$.
If $\btf(H)$ has a path of length $t$ then $H \in Q( t/2)$. 
Thus by the last lemma 
\begin{align*}
  \pr(\btf(R_n) &\mbox{ has diameter } \geq a ((k+1) \log k)^{\frac12} +4 \, | \, R_n \in [G])\\
 & \leq
   \pr(R_n \in Q(  (a/2) ((k+1) \log k)^{\frac12} +2) \, | \, R_n \in [G])\\
 & \leq 
   2 k^2 e^{-(a^2/8) \log k} \;\;\; = \; o(1) \; \mbox{ if } \; a > 4.
\end{align*}
  Further, since $k+1 \leq n$, 
  \begin{align*} 
  \pr(\btf(R_n) &\mbox{ has diameter } \geq \eps n \, | \, R_n \in [G])\\
 & \leq 
   \pr(R_n \in Q(\frac{\eps n}{3} +2) \, | \, R_n \in [G])\\
 & \leq 
   2 n^2 e^{-\frac{\eps^2 n}{18}}
\end{align*}
for $n \geq 12/\epsilon$  (so that  $2 (\frac{\eps n}{3} +2) \leq \eps n$).
\end{proof}  


\section{Concluding remarks}
\label{sec.concl}

We have seen that for a random graph $R_n$ from a block-stable class,
(or from the connected graphs in such a class), 
the maximum number of blocks containing a vertex is roughly no more than for a random tree $T_n$, and the maximum number of blocks through which a path may pass is at most a factor $O(\sqrt{\log n})$ times the maximum length of a path in $T_n$.

  Let us briefly consider connectedness.  A minor-closed class is block-stable if and only if it is addable; that is, each excluded minor is 2-connected, see~\cite{cmcd09}.  Indeed, any block-stable class $\cA$ containing the single edge $K_2$ is bridge-addable, and so by~\cite{msw05} the probability that $R_n \inu \cA$ is connected is at least $1/e$, and indeed $\liminf \pr(R_n \mbox{ is connected }) \geq e^{-\frac12}$, see~\cite{amr2012,kp2013} and see also~\cite{PS2015} for a recent more general result. 
However, consider the block-stable class $\cA$ in which the only allowed block is the triangle $C_3$: the set $\cA_n$ is non-empty for each $n \geq 5$, but for each even $n$ each graph in $\cA_n$ is disconnected.
\bigskip

\noindent
{\bf Acknowledgement}  
  Thanks to Oliver Riordan for useful discussions, and thanks to the referees for detailed reading and helpful comments.


\end{document}